\numberwithin{equation}{section}
\newtheorem{theorem}{Theorem}[section]
\newtheorem{proposition}[theorem]{Proposition}
\newtheorem{lemma}[theorem]{Lemma}
\theoremstyle{definition}
\newtheorem{remark}[theorem]{Remark}
\newtheorem{definition}[theorem]{Definition}
\newcommand{\Ad}{\operatorname{Ad}}
\newcommand{\Maps}{\operatorname{Maps}}
\renewcommand{\Im}{\operatorname{Im}}
\newcommand{\RP}{{\mathbb R}{\rm P}}
\begin{document}

\title{Analytic definition of spin structure}
\author{
Zhirayr Avetisyan
\thanks{ZA:
Department of Mathematics,
University College London,
Gower Street,
London WC1E~6BT,
UK;
z.avetisyan@ucl.ac.uk;
ZA was supported by EPSRC grant EP/M000079/1.
}
\and
Yan-Long Fang\thanks{YF:
Department of Mathematics,
University College London,
Gower Street,
London WC1E~6BT,
UK;
yan.fang.12@ucl.ac.uk.
}
\and
Nikolai Saveliev\thanks{NS:
Department of Mathematics,
University of Miami,
PO Box 249085
Coral Gables,
FL 33124,
USA;
saveliev@math.miami.edu,
\url{http://www.math.miami.edu/\~saveliev/};
NS was supported by
LMS grant 21420,  EPSRC grant EP/M000079/1, and the Simons Collaboration Grant 426269.
}
\and
Dmitri Vassiliev\thanks{DV:
Department of Mathematics,
University College London,
Gower Street,
London WC1E~6BT,
UK;
d.vassiliev@ucl.ac.uk,
\url{http://www.homepages.ucl.ac.uk/\~ucahdva/};
DV was supported by EPSRC grant EP/M000079/1.
}}

\renewcommand\footnotemark{}



\maketitle
\begin{abstract}
We work on a parallelizable time-orientable Lorentzian 4-manifold and prove that
in this case the notion of spin structure can be equivalently defined
in a purely analytic fashion.
Our analytic definition relies on the use of the concept of a
non-degenerate two-by-two formally self-adjoint first order linear differential operator
and gauge transformations of such operators.
We also give an analytic definition
of spin structure for the 3-dimensional Riemannian case.

\

{\bf Keywords: }
Dirac equation, gauge transformations, spin.

\

{\bf MSC classes: }
primary 35R01; secondary 35L40, 35Q41, 53C27, 53C50.

\end{abstract}

\newpage

\tableofcontents

\section{Playing field}
\label{Playing field}

Let $M$ be a connected smooth 4-manifold without boundary, not necessarily compact.

In this paper we will be working with functions on $M$,
densities on $M$, vector fields on~$M$, metrics on $M$ etc, and all of these
will be assumed to be infinitely smooth.
We will also be working with differential operators acting on $M$
and the coefficients of these differential operators
will be assumed to be infinitely smooth.

We will use Latin letters for \emph{anholonomic (frame)} indices
and Greek letters for \emph{holonomic (tensor)} indices.
We will use the convention of summation
over repeated indices; this will apply both to
Greek and to Latin indices.

A half-density is a quantity $M\to\mathbb{C}$ which under changes of local
coordinates transforms as the square root of a density.
We will be working with compactly supported two-columns of half-densities
and will define the inner product on pairs $v,w$ of such 2-columns as
$\langle v,w\rangle:=\int_{M} w^*v\,dx\,$.
Here $x=(x^1,x^2,x^3,x^4)$ are local coordinates on $M$,
$dx=dx^1dx^2dx^3dx^4$ and the star stands for Hermitian conjugation.

Let $L$ be a first order linear differential
operator acting on 2-columns of half-densities.
In local coordinates this operator reads
$
L=F^\alpha(x)\frac\partial{\partial x^\alpha}+G(x)$,
where $F^\alpha(x)$ and $G(x)$ are some $2\times 2$ matrix-functions.
The problem here is that these matrix-functions are not invariant
under changes of local coordinates.
The standard way of providing an invariant analytic description of the
operator $L$ is by means of its principal and subprincipal symbols defined as
\begin{equation}
\label{definition of the principal symbol}
L_\mathrm{prin}(x,p):=iF^\alpha(x)\,p_\alpha\,,
\end{equation}
\begin{equation}
\label{definition of the subprincipal symbol}
L_\mathrm{sub}(x)
:=
G(x)+\frac i2(L_\mathrm{prin})_{x^\alpha p_\alpha}(x)
=
G(x)-\frac12(F^\alpha)_{x^\alpha}(x)
\end{equation}
respectively.
Here $p=(p_1,p_2,p_3,p_4)$ is the dual variable (momentum)
and the subscripts indicate partial derivatives.
It is known that $L_\mathrm{prin}$ and $L_\mathrm{sub}$
are invariantly defined matrix-functions
on $T^*M$ and $M$ respectively,
see
\cite[subsection 2.1.3]{mybook}
and
\cite[Appendix A]{action}
for details.
Furthermore,
it is easy to see that the principal and sub\-principal symbols
uniquely determine our first order differential operator
and that our operator is formally self-adjoint
if and only if its principal and subprincipal symbols are Hermitian.

Further on we work with $2\times2$ formally self-adjoint first order differential operators.

\begin{definition}
\label{definition of non-degeneracy}
We say that the operator $L$ is \emph{non-degenerate} if
\begin{equation}
\label{definition of non-degeneracy equation}
L_\mathrm{prin}(x,p)\ne0,\qquad\forall(x,p)\in T^*M\setminus\{0\}.
\end{equation}
\end{definition}

Not every 4-manifold admits a non-degenerate operator. The following lemma
establishes the appropriate criterion.

\begin{lemma}
\label{lemma on parallelizability}
The manifold $M$ admits a non-degenerate operator $L$ if and only if it is parallelizable.
\end{lemma}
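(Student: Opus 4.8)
The plan is to extract the geometry directly from the principal symbol. Because $L$ is formally self-adjoint, its principal symbol $L_{\mathrm{prin}}(x,p)=iF^\alpha(x)\,p_\alpha$ is Hermitian, so for each fixed $x$ the map
\[
A_x\colon T^*_xM\longrightarrow\mathcal H,\qquad A_x(p):=iF^\alpha(x)\,p_\alpha,
\]
is a linear map into the real vector space $\mathcal H$ of $2\times2$ Hermitian matrices. The decisive point is the dimension coincidence $\dim_{\mathbb R}T^*_xM=4=\dim_{\mathbb R}\mathcal H$, the latter because a $2\times2$ Hermitian matrix is parametrised by its two real diagonal entries and one complex off-diagonal entry. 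By Definition~\ref{definition of non-degeneracy}, non-degeneracy of $L$ says exactly that $A_x(p)\ne0$ for every $p\ne0$, i.e.\ that each $A_x$ is injective; between real vector spaces of the same finite dimension, injectivity forces $A_x$ to be an isomorphism. Smoothness of $F^\alpha$ makes the family $\{A_x\}_{x\in M}$ smooth in $x$.

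For the forward implication I would fix once and for all a basis $s_1,\dots,s_4$ of $\mathcal H$, for instance the identity matrix together with the three Pauli matrices, with dual basis $s^1,\dots,s^4$ of $\mathcal H^*$. The transposes $A_x^{\mathsf T}s^a\in(T^*_xM)^*=T_xM$ then define four smooth vector fields $v_1,\dots,v_4$ on $M$, and since each $A_x$ is an isomorphism these are pointwise linearly independent, hence constitute a global frame of $TM$. Thus $TM$ is trivial and $M$ is parallelizable.

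The converse reverses this construction. Given a global frame $v_1,\dots,v_4$ of $TM$ and the same fixed basis $s_1,\dots,s_4$ of $\mathcal H$, I would prescribe the principal symbol $L_{\mathrm{prin}}(x,p):=\sum_{a=1}^4 s_a\,\langle p,v_a(x)\rangle$, which is Hermitian and, for $p\ne0$, nonzero because the pairings $\langle p,v_a(x)\rangle$ cannot all vanish on a frame while the $s_a$ are linearly independent; so it is non-degenerate. To realise it by an operator one takes $F^\alpha:=-i\sum_a s_a\,v_a^\alpha$, which is skew-Hermitian so that $L_{\mathrm{prin}}$ is indeed Hermitian, and chooses the zeroth order term $G$ so that the subprincipal symbol \eqref{definition of the subprincipal symbol} is Hermitian, say so that it vanishes. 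By the remark that the principal and subprincipal symbols determine the operator uniquely, these globally defined Hermitian symbols assemble into a single formally self-adjoint first order operator $L$, which is non-degenerate by construction.

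The only genuine content is the numerical coincidence $\dim_{\mathbb R}\mathcal H=4=\dim M$, which is precisely what links the $2\times2$ matrix format to dimension four; the rest is bookkeeping. The step needing a little care is the converse, where one must verify that prescribing invariant symbols produces a well-defined global operator and not merely a chart-by-chart collection; this is exactly where the coordinate-invariance of the principal and subprincipal symbols, recalled just before Definition~\ref{definition of non-degeneracy}, is used.
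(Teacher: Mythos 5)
Your proof is correct and is essentially the paper's own argument: viewing the principal symbol as a pointwise isomorphism $T_x^*M\to\mathcal H$ and pulling back a dual basis is exactly the paper's decomposition $L_\mathrm{prin}(x,p)=s^j e_j{}^\alpha(x)\,p_\alpha$ in the Pauli basis, with non-degeneracy translating into pointwise linear independence of the resulting vector fields, and both hinge on the dimension coincidence $\dim_{\mathbb R}\mathcal H=4=\dim M$. The only difference is cosmetic: you spell out the reconstruction of the full operator from prescribed Hermitian principal and subprincipal symbols, which the paper leaves implicit in its remarks preceding Definition~\ref{definition of non-degeneracy}.
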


\begin{proof}
Decomposing $L_\mathrm{prin}(x,p)$ with respect to the standard basis
\begin{equation}
\label{standard basis}
s^1=
\begin{pmatrix}
0&1\\
1&0
\end{pmatrix},
\quad
s^2=
\begin{pmatrix}
0&-i\\
i&0
\end{pmatrix},
\quad
s^3=
\begin{pmatrix}
1&0\\
0&-1
\end{pmatrix},
\quad
s^4=
\begin{pmatrix}
1&0\\
0&1
\end{pmatrix}
\end{equation}
in the real vector space of $2\times2$ Hermitian matrices, we get
\begin{equation}
\label{principal symbol via frame 2}
L_\mathrm{prin}(x,p)=s^j e_j{}^\alpha(x)\,p_\alpha\,,
\end{equation}
where the $e_j$, $j=1,2,3,4$, are some real-valued vector fields.
Here each vector $e_j(x)$ has coordinate components
$e_j{}^\alpha(x)$, $\alpha=1,2,3,4$.

Formula \eqref{principal symbol via frame 2} establishes a one-to-one correspondence
between principal symbols and tetrads\footnote{Further on,
in Section~\ref{Main result},
we will start referring to this tetrad of vector fields as a \emph{framing}.
This will be a purely terminological change,
for the benefit of readers accustomed to terminology used in topology and differential geometry.}
of vector fields.
Furthermore, formula \eqref{principal symbol via frame 2} allows us to
rewrite the non-degeneracy condition \eqref{definition of non-degeneracy equation} as
\begin{equation}
\label{definition of non-degeneracy equation in terms of vector fields}
\det e_j{}^\alpha(x)\ne0,\qquad\forall x\in M.
\end{equation}
But condition \eqref{definition of non-degeneracy equation in terms of vector fields}
is the condition of linear independence of the vector fields $e_j\,$.
\end{proof}

\begin{remark}
The critical element of the above poof is the fact that the dimension
of our manifold equals the dimension of the real vector space
of $2\times2$ Hermitian matrices.
\end{remark}

Further on we assume that our 4-manifold $M$ is parallelizable.

\section{Correspondence between operators and Lorentzian metrics}
\label{Correspondence between operators and Lorentzian metrics}

Observe that the determinant of the principal symbol is a quadratic form
in momentum $p$\,:
\begin{equation}
\label{definition of metric}
\det L_\mathrm{prin}(x,p)=-g^{\alpha\beta}(x)\,p_\alpha p_\beta\,.
\end{equation}
We interpret the real coefficients $g^{\alpha\beta}(x)=g^{\beta\alpha}(x)$,
$\alpha,\beta=1,2,3,4$, appearing in formula (\ref{definition of metric})
as components of a (contravariant) metric tensor.
It is known \cite[Lemma 2.1]{nongeometric} that this metric is
Lorentzian, i.e.~it has three positive eigenvalues and one negative
eigenvalue.

It turns out that the Lorentzian metric defined in accordance with formula
\eqref{definition of metric} has an additional geometric property.
In order to describe this property we need some definitions.

\begin{definition}
\label{definition of timelike}
A vector field $u$ is said to be
\emph{spacelike} if $(g_{\alpha\beta}\,u^\alpha u^\beta)(x)>0$, $\forall x\in M$,
\emph{lightlike} if $(g_{\alpha\beta}\,u^\alpha u^\beta)(x)=0$, $\forall x\in M$,
and
\emph{timelike} if $(g_{\alpha\beta}\,u^\alpha u^\beta)(x)<0$, $\forall x\in M$.
\end{definition}

\begin{definition}
\label{definition of time-orientable}
The Lorentzian manifold $(M,g)$ is said to be \emph{time-orientable}
if it admits a timelike vector field.
\end{definition}

Observe that the trace of the principal symbol is a linear form
in momentum $p$ and the coefficients of this linear form
can be interpreted as components of a vector field.
With the standard choice of Pauli matrices \eqref{standard basis}
the linear form in question reads
\begin{equation}
\label{trace of principal symbol}
\operatorname{tr}L_\mathrm{prin}(x,p)=2e_4{}^\alpha p_\alpha\,.
\end{equation}
It is easy to see that the vector field $e_4$ is timelike.
Thus, formula \eqref{definition of metric} defines a time-orientable
Lorentzian metric on our parallelizable
4-manifold $M$.

Let us now perform the above argument the other way round.
Suppose we have a parallelizable 4-manifold $M$ equipped with
a time-orientable Lorentzian metric.
In this case we have a timelike vector field which we will denote by $e_4$.
Without loss of generality we may assume that this timelike vector field is normalised,
i.e.~that $(g_{\alpha\beta}\,e_4{}^\alpha e_4{}^\beta)(x)=-1$, $\forall x\in M$.

\begin{lemma}
\label{lemma 2 point 3}
One can choose vector fields $e_1$, $e_2$ and $e_3$ such that the tetrad $e_j$, $j=1,2,3,4$, 
is linearly independent at all points of the manifold.
\end{lemma}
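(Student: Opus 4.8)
The plan is to reduce the statement to a topological frame-completion problem and then resolve it using the parallelizability of the $3$-sphere. Since $M$ is parallelizable by Lemma~\ref{lemma on parallelizability}, I would fix a global framing $f_1,\dots,f_4$, which provides a trivialization $TM\cong M\times\mathbb{R}^4$. Under this trivialization the vector field $e_4$, being nowhere zero, corresponds to a smooth map $M\to\mathbb{R}^4\setminus\{0\}$; dividing by its Euclidean norm (computed in the framing) yields a smooth map $h\colon M\to S^3$. Note that only the nowhere-vanishing of $e_4$ is used here, not its timelike character. Completing $e_4$ to a tetrad now amounts to choosing, smoothly in $x$, a basis of $\mathbb{R}^4$ whose last vector is $h(x)$; equivalently, to producing a smooth map $\Psi\colon M\to SO(4)$ with $\Psi(x)\,\mathbf{e}=h(x)$ for a fixed unit vector $\mathbf{e}$. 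This is exactly a lift of $h$ through the fibration $p\colon SO(4)\to S^3$, $p(A)=A\,\mathbf{e}$.

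The key step is to construct a \emph{global} section of $p$. I would identify $\mathbb{R}^4$ with the quaternions $\mathbb{H}$, so that $S^3$ becomes the group of unit quaternions, and take $\mathbf{e}=1$. For $q\in S^3$ the left multiplication $L_q\colon x\mapsto qx$ preserves the norm and orientation, hence lies in $SO(4)$, and satisfies $L_q(1)=q$. Therefore $\sigma\colon S^3\to SO(4)$, $\sigma(q):=L_q$, is a smooth section of $p$. Composing, $\Psi:=\sigma\circ h$ is the desired lift, and it exists for \emph{any} $h$ precisely because $\sigma$ is defined on all of $S^3$.

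To assemble the tetrad I would let $e_1,e_2,e_3$ be the vector fields corresponding under the trivialization to $L_{h(x)}i$, $L_{h(x)}j$, $L_{h(x)}k$; these depend smoothly on $x$. At each point the four vectors $h(x)i$, $h(x)j$, $h(x)k$, $h(x)$ form an orthonormal basis of $\mathbb{R}^4$, so $e_1,e_2,e_3$ together with the field corresponding to $h$ are linearly independent. Since $h$ is a positive multiple of $e_4$ in the trivialization, the tetrad $e_1,e_2,e_3,e_4$ is linearly independent at every point of $M$, which is what we want. If in addition one wants $e_1,e_2,e_3$ to be $g$-orthonormal and spacelike, one can afterwards apply the Gram--Schmidt process in the $g$-orthogonal complement of $e_4$, which is positive definite because $e_4$ is timelike.

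The main obstacle, and the only nontrivial point, is the existence of the global section $\sigma$. A priori the fibration $p\colon SO(4)\to S^3$ need not admit a section; it is not enough to observe that the orthogonal complement $e_4^{\perp}$ is stably trivial (which follows formally from parallelizability, since $e_4^{\perp}\oplus\langle e_4\rangle\cong TM$ is trivial), because stable triviality of a rank-$3$ bundle over a $4$-manifold does not by itself imply triviality. What rescues the argument is the special algebraic structure of $S^3$ as a Lie group, exploited above through quaternionic multiplication.
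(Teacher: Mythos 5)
Your proof is correct and follows essentially the same route as the paper: trivialize $TM$, view the normalized $e_4$ as a map $M \to S^3$, and complete it to a tetrad by composing with a global section of the fibration $SO(4) \to S^3$. The only difference is that the paper simply cites the parallelizability of $S^3$ to obtain this section, while you construct it explicitly via left quaternionic multiplication --- which is precisely the standard proof of that parallelizability.
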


\begin{proof}
Let us fix a trivialization $TM = M \times \mathbb R^4$ and view $e_4$ as a smooth map $M \to S^3$. Since $S^3$ is parallelizable, its orthonormal frame bundle $SO(3) \to SO(4) \to S^3$ is trivial and therefore admits a section $S^3 \to SO(4)$. Composing this section with the map $M \to S^3$ we obtain the desired tetrad. 
\end{proof}

\begin{remark}
\label{2 and 8}
The above argument also works for parallelizable manifolds of dimension 2 and~8.
\end{remark}

Applying now the Gram--Schmidt process
we obtain new vector fields $e_1$, $e_2$, $e_3$ and $e_4$ such that
\begin{equation}
\label{Lorentzian orthonormality}
(g_{\alpha\beta}\,e_j{}^\alpha e_k{}^\beta)(x)
=
\begin{cases}
\phantom{+}0\quad\text{if}\quad j\ne k,\\
+1\quad\text{if}\quad j=k\ne4,
\\
-1\quad\text{if}\quad j=k=4,
\end{cases}
\end{equation}
for all $x\in M$.
Here the Gram--Schmidt process works because the restriction of a Lorentzian metric
to the orthogonal complement of a timelike vector gives a Riemannian metric.
Finally, substituting our tetrad $e_j$, $j=1,2,3,4$, into \eqref{principal symbol via frame 2}
we obtain a principal symbol with the property~\eqref{definition of metric}.

\section{Gauge transformations and spin structure}
\label{Gauge transformations and spin structure}

From now on the time-orientable Lorentzian metric is assumed to be fixed.
We will work with all possible
$2\times2$ formally self-adjoint non-degenerate first order linear differential operators
corresponding,
in the sense of formula \eqref{definition of metric},
to the given metric.
It was shown in the previous section that the set of such operators is non-empty.
Our aim is to classify operators corresponding to the given metric.

We specify an orientation on our manifold
and define the topological charge of our operator~as
\begin{equation}
\label{definition of relative orientation}
\mathbf{c}_\mathrm{top}:=
-\frac i2\sqrt{|\det g_{\alpha\beta}|}\,\operatorname{tr}
\bigl(
(L_\mathrm{prin})_{p_1}
(L_\mathrm{prin})_{p_2}
(L_\mathrm{prin})_{p_3}
(L_\mathrm{prin})_{p_4}
\bigr)=\operatorname{sgn}\det e_j{}^\alpha,
\end{equation}
with the subscripts $p_1$, $p_2$, $p_3$, $p_4$
indicating partial derivatives with respect to the components of momentum
$p=(p_1,p_2,p_3,p_4)$.
It is easy to see that
the number $\mathbf{c}_\mathrm{top}$ defined by formula
\eqref{definition of relative orientation}
can take only two values, $+1$ or $-1$,
and describes the orientation of the principal symbol
relative to our chosen orientation of local coordinates $x=(x^1,x^2,x^3,x^4)$.

Let us choose a timelike covector field $q$ and use it as a reference.
It is easy to see that the real-valued scalar function
$\operatorname{tr} L_\mathrm{prin}(x,q(x))$ does not vanish.
We define the temporal charge of our operator as
\begin{equation}
\label{definition of temporal charge}
\mathbf{c}_\mathrm{tem}:=
\operatorname{sgn}\operatorname{tr}
L_\mathrm{prin}(x,q(x))
=\operatorname{sgn}(q_\alpha e_4{}^\alpha)\,,
\end{equation}
see also formula \eqref{trace of principal symbol}.
Note that $q_\alpha e_4{}^\alpha\ne0$ because both $q$ and $e_4$ are timelike.
The number $\mathbf{c}_\mathrm{tem}$ defined by formula
\eqref{definition of temporal charge}
describes the orientation of the principal symbol
relative to our chosen time orientation.

We perform a primary classification of our operators based on the values of their
topological and temporal charges. Obviously,
the four classes of operators in this primary classification correspond to the
four connected components of the Lorentz group.

Further on we assume the topological and temporal charges to be fixed.


In order to classify our operators further we
introduce an arbitrary smooth $2\times2$ complex-valued matrix-function $R$
of determinant one,
\begin{equation}
\label{matrix-function R}
R:M\to SL(2,\mathbb{C}),
\end{equation}
and consider the transformation
\begin{equation}
\label{gauge transformation equation}
L\mapsto R^*LR.
\end{equation}
The transformation
\eqref{gauge transformation equation}
of our differential operator $L$ induces the transformation
\begin{equation}
\label{SL2C transformation of the principal symbol}
L_\mathrm{prin}\mapsto R^*L_\mathrm{prin}R
\end{equation}
of its principal symbol.
(The induced transformation of the subprincipal symbol will be considered
later in Section \ref{Classification beyond spin structure}.)
Formulae
\eqref{definition of metric},
\eqref{matrix-function R}
and
\eqref{SL2C transformation of the principal symbol}
imply that the transformation
\eqref{gauge transformation equation}
preserves our Lorentzian metric $g$.
Furthermore, it is easy to see
that the transformation
\eqref{gauge transformation equation}
preserves the topological charge
\eqref{definition of relative orientation}
and the temporal charge
\eqref{definition of temporal charge}.

We interpret \eqref{gauge transformation equation} as a gauge transformation
because it preserves the prescribed metric as well as the prescribed
topological and temporal charges.

Our analytic definition of spin structure is formulated as follows.

\begin{definition}
\label{definition of equivalent operators}
We say that the operators $L$ and $\tilde L$ are \emph{equivalent} if
\begin{equation}
\label{equivalence of principal symbols}
\tilde L_\mathrm{prin}=R^*L_\mathrm{prin}R
\end{equation}
for some smooth matrix-function \eqref{matrix-function R}.
An equivalence class of operators is called \emph{spin structure}.
\end{definition}

\section{Main result}
\label{Main result}

In this section we will show that our analytic definition of spin structure, Definition~\ref{definition of equivalent operators}, is equivalent to the traditional geometric definition. 

We begin by restating our analytic definition
Definition~\ref{definition of equivalent operators} in terms of framings.  By a \emph{frame} at a point $x \in M$ we mean a positively oriented and positively time-oriented orthonormal, in the Lorentzian sense \eqref{Lorentzian orthonormality},
frame in the tangent space $T_x M$, and by a \emph{framing} of $M$ a choice of a frame at every point $x \in M$ depending smoothly on the
point\footnote{
The terminology used in mathematical literature
and theoretical physics literature
is somewhat different.
In mathematical literature a frame refers to a set of vectors at a given point,
whereas in theoretical physics literature a frame refers to a set of vector fields.
In the current section as well as
in Section \ref{The 3-dimensional Riemannian case}
we use terminology prevalent in mathematical literature}.
In our case we have an explicit formula
\eqref{principal symbol via frame 2}
establishing a one-to-one correspondence
between principal symbols and framings.
Any two framings of the same manifold $M$ are related by a uniquely defined smooth function $f: M \to SO^+(3,1)$, where $SO^+(3,1)$ is the identity component of the Lorentz group.
Rephrasing Definition~\ref{definition of equivalent operators},
we will say that two framings are \emph{equivalent} if the function $f$ relating them factors as 
\[
f: M \rightarrow SL(2,\mathbb C) \overset{\Ad}{\longrightarrow} SO^+(3,1),
\]
where $\Ad: SL(2,\mathbb C) \to SO^+(3,1)$ is the adjoint representation. A spin structure on $M$ is then an equivalence class of framings. 

Note that a choice of reference framing on $M$ provides a trivialization of the tangent bundle $TM$ so that any other framing is related to this reference framing by a smooth function $f: M \to SO^+(3,1)$. Two framings corresponding to functions $f_1$ and $f_2$ are equivalent in the above sense if and only if there exists a smooth function $h: M \to SL(2,\mathbb C)$ such that $f_2 \cdot \Ad h = f_1$ as functions $M \to SO^+(3,1)$.

As the traditional definition of Lorentzian
spin structure we will use the definition from Baum\cite{baum_book,baum},
see also Bichteler \cite{bichteler}.
In the special case at hand, using the trivialization of the tangent bundle $TM$ via the reference frame, it reads as follows. A spin structure on $M$ is an equivalence class of commutative diagrams
\smallskip
\begin {equation*}
\begin{tikzpicture}
\draw (8,6) node (a) {$M \times SL(2,\mathbb C)$};
\draw (8,3) node (b) {$M \times SO^+(3,1)$};
\draw (11,4.5) node (c) {$M$};
\draw[->](a)--(b) node [midway,right](TextNode){$\Phi$};
\draw[->](a)--(c) node [midway,above](TextNode){$\pi$};
\draw[->](b)--(c) node [midway,above](TextNode){$\pi$};
\end{tikzpicture}
\end {equation*}
where $\pi$ stands for the projection onto the first factor, and the map $\Phi$ is equivariant in that $\Phi(x,g) = \Phi (x,1)\cdot \Ad g$ for all $x \in M$ and $g \in SL(2,\mathbb C)$. Two diagrams as above with the vertical maps $\Phi_1$ and $\Phi_2$ are called equivalent if there is a commutative diagram
\begin {equation*}
\begin{tikzpicture}
\draw (8,6) node (a) {$M \times SL(2,\mathbb C)$};
\draw (12,6) node (b) {$M \times SL(2,\mathbb C)$};
\draw (10,3.5) node (c) {$M \times SO^+(3,1)$};
\draw[->](a)--(b) node [midway,above](TextNode){$A$};
\draw[->](a)--(c) node [midway,left](TextNode){$\Phi_1\;$};
\draw[->](b)--(c) node [midway,right](TextNode){$\;\;\Phi_2$};
\end{tikzpicture}
\end {equation*}
such that $\pi\circ A = \pi$ and the map $A$ is equivariant in that $A(x,g) = A(x,1)\cdot g$ for all $x \in M$ and $g \in SL(2,\mathbb C)$.

\begin{theorem}
\label{Theorem Lorentzian}
For parallelizable time-orientable Lorentzian 4-manifolds the two definitions of spin structure,
our analytic definition and the traditional one,
are equivalent.
\end{theorem}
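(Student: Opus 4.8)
The plan is to fix once and for all the reference framing that trivializes $TM$ and to use it to translate both definitions into statements about smooth maps $M \to SO^+(3,1)$. On the analytic side this has already been carried out in the excerpt: a framing is recorded by the function $f \colon M \to SO^+(3,1)$ relating it to the reference, and two framings are equivalent precisely when $f_1 = f_2 \cdot \Ad h$ for some smooth $h \colon M \to SL(2,\mathbb{C})$. The whole theorem then reduces to checking that the traditional (Baum) definition, written in the trivialized form displayed above, produces the \emph{same} set of maps $M \to SO^+(3,1)$ modulo the \emph{same} equivalence relation; once both sides are exhibited as quotients of $\Maps(M, SO^+(3,1))$ by one and the same relation, the two notions of spin structure are literally the same set and the asserted bijection is the identity.

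First I would unwind the traditional data. Because each $\Phi$ commutes with the projections and is $\Ad$-equivariant, $\Phi(x,g) = \Phi(x,1)\cdot\Ad g$, so $\Phi$ is completely determined by its restriction to $M\times\{1\}$; writing $\Phi(x,1) = (x,\phi(x))$ yields a smooth $\phi \colon M \to SO^+(3,1)$, and conversely every such $\phi$ defines an admissible $\Phi$. This gives a bijection between commutative equivariant diagrams and $\Maps(M, SO^+(3,1))$. The same reasoning handles a connecting map $A$: the conditions $\pi\circ A = \pi$ and $A(x,g) = A(x,1)\cdot g$ force $A(x,g) = (x, a(x)g)$ for a unique smooth $a \colon M \to SL(2,\mathbb{C})$. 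Substituting these normal forms into $\Phi_2\circ A = \Phi_1$ and using $\Ad(a g) = \Ad a\,\Ad g$ collapses the identity, after cancelling the free variable $g$, to $\phi_1 = \phi_2\cdot\Ad a$. This is word for word the analytic equivalence relation, with $a$ in the role of $h$, so the two quotients of $\Maps(M, SO^+(3,1))$ coincide and the theorem follows.

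The step I expect to carry the real weight is not this bookkeeping but the justification that the trivialized picture is a faithful rendering of Baum's definition, namely that every spin structure is represented by a diagram whose total space is the \emph{trivial} bundle $M\times SL(2,\mathbb{C})$, so that no spin structure is lost by fixing the total space. Equivalently, I must check that over our manifold every principal $SL(2,\mathbb{C})$-bundle arising as the total space of a spin structure is trivial. Since $SL(2,\mathbb{C})$ retracts onto $SU(2)=S^3$, such a bundle over the (at most) four-dimensional $M$ is governed by a single obstruction in $H^4(M;\mathbb{Z})$, and I would argue that this obstruction is always $2$-torsion, being pulled back from the nonzero but $2$-torsion class in $H^4(\RP^\infty;\mathbb{Z})$ that encodes the relevant $k$-invariant. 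For a connected oriented $4$-manifold $H^4(M;\mathbb{Z})$ equals $\mathbb{Z}$ when $M$ is closed and $0$ when $M$ is open, so in either case it has no $2$-torsion and the obstruction vanishes. With this in hand every spin structure does arise from some $\phi$, and I would close by remarking that the resulting identification is independent of the chosen reference framing: changing the reference amounts to left-translating all the maps $\phi$ and $f$ by a fixed function $M\to SO^+(3,1)$, which preserves the equivalence relation and hence the quotient.
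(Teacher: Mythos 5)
Your core argument coincides with the paper's: after fixing the reference framing, you unwind the equivariance conditions to show that a diagram $\Phi$ is determined by the single map $\psi=\phi(\cdot,1)\colon M\to SO^+(3,1)$, that a connecting map $A$ is determined by $\beta=\alpha(\cdot,1)\colon M\to SL(2,\mathbb C)$, and that commutativity of the second diagram collapses to $\psi_1=\psi_2\cdot\Ad\beta$, which is word for word the analytic equivalence relation. The genuine difference is your third paragraph. The paper simply asserts that, in the trivialized setting, Baum's definition \emph{reads} as diagrams with total space $M\times SL(2,\mathbb C)$; it never verifies that no spin structure in Baum's sense has a non-trivializable principal $SL(2,\mathbb C)$-bundle $Q$ upstairs, so that nothing is lost by fixing the total space. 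You correctly identify this as the load-bearing step and close it: since $\Phi\colon Q\to M\times SO^+(3,1)$ is a fiberwise double cover, pulling $Q$ back along the identity section $x\mapsto(x,1)$ exhibits $Q$ as induced from a principal $\mathbb Z_2$-bundle via the center $\{\pm 1\}\subset SL(2,\mathbb C)$ --- this reduction is the one unstated ingredient needed to make your ``pulled back from $H^4(\RP^\infty;\mathbb Z)$'' assertion precise --- hence the obstruction class $c_2(Q)$ is $2$-torsion, while $H^4$ of a connected oriented $4$-manifold ($\mathbb Z$ if closed, $0$ if open) has none; since $BSL(2,\mathbb C)\simeq BSU(2)$ is $3$-connected, vanishing of this single obstruction forces $Q$ to be trivial. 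So your route is the paper's bookkeeping plus a justification, absent from the paper, that its trivialized rendering is faithful to Baum's actual definition; that is a worthwhile strengthening, not a deviation. One caveat: your closing claim that the identification is independent of the reference framing contradicts the paper's explicit remark that the equivalence is \emph{not} canonical. The discrepancy is one of formulation: in the paper's rendering the traditional objects are maps into the literal product $M\times SO^+(3,1)$, which do not transform when the reference framing changes (only $f$ does), so that identification is reference-dependent; your canonicity argument is valid only if the traditional objects live over the abstract frame bundle, so that the coordinate representation $\phi$ is left-translated along with $f$. You should say explicitly which formulation you are using.
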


\begin{proof}
Using the commutativity of the first diagram, write $\Phi (x,g) = (x,\phi(x,g))$ for some function $\phi: M \times SL(2,\mathbb C) \to SO^+(3,1)$ and observe that the equivariance condition on $\Phi$ translates into the equation $\phi(x,g) = \phi(x,1)\cdot \Ad g$. Therefore, the map $\Phi$ is uniquely determined by the map $\psi: M \to SO^+(3,1)$ given by $\psi(x) = \phi(x,1)$. 

Similarly, write $A(x,g) = (x,\alpha(x,g))$ and observe that the equivariance condition on $A$ translates into the equation $\alpha(x,g) = \alpha(x,1)\cdot g$. Therefore, the map $A$ is uniquely determined by the map $\beta: M \to SL(2,\mathbb C)$ given by $\beta(x) = \alpha(x,1)$. One can easily check that the second commutative diagram then simply means that $\psi_2\cdot\Ad\beta = \psi_1$ as functions $M \to SO^+(3,1)$.

This completes the proof of the equivalence of two definitions of spin structure. Note that the equivalence we established is not canonical in that it depends on the choice of reference frame.
\end{proof}

\begin{remark}
According to \cite[Theorem 2]{baum}, the equivalence classes of Lorentzian spin structures on $M$ are classified by the cohomology group $H^1(M; \mathbb Z_2)$. This is an analogue of the well-known classification of Riemannian spin structures, see Proposition \ref{7 point 4} and Remark \ref{7 point 4 remark}.
\end{remark}


\section{Topological and geometric restrictions}
\label{Topological and geometric restrictions}

In Sections \ref{Playing field}--\ref{Gauge transformations and spin structure}
we gave an analytic definition of the concept of Lorentzian spin structure. This definition works
only for parallelizable 4-manifolds equipped with time-orientable Lorentzian metric.
In Section \ref{Main result} we proved, under the assumptions of parallelizability
and time-orientability, equivalence of our analytic definition to the traditional one.
It is natural to examine how restrictive these assumptions are.

\begin{proposition}
A non-compact time-orientable Lorentzian 4-manifold $M$ is parallelizable if and only if it is spin. 
\end{proposition}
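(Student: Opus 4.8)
The plan is to prove the two implications separately, phrasing everything in terms of the Stiefel--Whitney classes $w_1(TM)$ and $w_2(TM)$, where ``$M$ is spin'' means $w_1(TM)=0$ and $w_2(TM)=0$. The forward implication is immediate and uses neither hypothesis: if $M$ is parallelizable then $TM$ is trivial, so its total Stiefel--Whitney class equals $1$; in particular $w_1(TM)=w_2(TM)=0$, and $M$ is spin.

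For the converse I would first exploit time-orientability. By Definition~\ref{definition of time-orientable} there is a timelike vector field $e_4$, which is nowhere zero and hence spans a trivial line subbundle $L\cong\underline{\mathbb{R}}$ of $TM$. Taking its orthogonal complement with respect to the Lorentzian metric $g$ gives a splitting $TM\cong\underline{\mathbb{R}}\oplus E$, where $E=e_4^{\perp}$ is a rank-$3$ subbundle on which $g$ is positive definite (the same positive-definiteness exploited for Gram--Schmidt in Section~\ref{Correspondence between operators and Lorentzian metrics}). Since $w(\underline{\mathbb{R}})=1$, the Whitney sum formula gives $w_i(E)=w_i(TM)$ for all $i$, so the spin hypothesis yields $w_1(E)=0$ and $w_2(E)=0$. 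In particular $E$ is orientable, and I fix an orientation. The claim then reduces to showing that $E$ is trivial, for then $TM\cong\underline{\mathbb{R}}\oplus\underline{\mathbb{R}}^3\cong\underline{\mathbb{R}}^4$ and $M$ is parallelizable.

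To trivialize $E$ I would use non-compactness to lower the dimension of the base. A connected non-compact $n$-manifold without boundary has the homotopy type of a CW complex of dimension at most $n-1$; one sees this from a proper Morse function on $M$ that is bounded below but not above, whose handle decomposition can be arranged to contain no handle of top index $n$. Thus $M$ is homotopy equivalent to a complex $X$ with $\dim X\le 3$, over which $E$ is classified by a map $X\to BSO(3)$. Because $w_2(E)=0$, the structure group of $E$ lifts to $\operatorname{Spin}(3)=SU(2)\cong S^3$, so $E$ is associated to a principal $S^3$-bundle classified by a map $X\to BS^3$. Now $BS^3=\mathbb{HP}^\infty$ is $3$-connected, so every map from the $3$-complex $X$ into it is nullhomotopic: the obstructions to a nullhomotopy lie in $H^{k}(X;\pi_{k}(BS^3))$, which vanish for $k\le 3$ because $\pi_k(BS^3)=0$ there, and for $k\ge 4$ because $\dim X\le 3$. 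Hence the $S^3$-bundle is trivial, its associated $SO(3)$-frame bundle is trivial, and therefore $E$ is trivial; combined with the splitting, this proves $M$ parallelizable.

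The main obstacle, and the step I expect to require the most care, is the dimension reduction: the whole argument hinges on replacing $M$ by a CW complex of dimension $\le 3$, which is exactly where non-compactness enters. Without it one is left with a rank-$3$ (or, via $BSO(4)$, rank-$4$) bundle over a $4$-dimensional base, where a top-dimensional obstruction — the Euler class, detecting the Euler characteristic — reappears, so $w_2=0$ no longer suffices; this is precisely why the compact case genuinely demands extra vanishing invariants. Time-orientability plays the complementary role of peeling off a trivial line bundle so that the residual obstruction theory takes place in the clean range where $\operatorname{Spin}(3)\cong S^3$ has a $3$-connected classifying space.
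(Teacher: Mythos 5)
Your proof is correct, but it takes a genuinely different route from the paper's. The paper never splits the tangent bundle: it observes that $SO(3)\subset SO^+(3,1)$ is a deformation retract, so $TM$ is classified by a homotopy class of maps $M\to BSO(3)$, and then invokes the Dold--Whitney classification \cite{dold-whitney} of oriented sphere bundles over a $4$-complex by $w_2$ and $p_1\in H^4(M;\mathbb Z)$; non-compactness enters only through the vanishing of $H^4(M;\mathbb Z)$. You instead use time-orientability geometrically, peeling off the trivial line bundle spanned by $e_4$ to get $TM\cong\underline{\mathbb R}\oplus E$ with $E$ a rank-$3$ Riemannian bundle, lift $E$ to $\operatorname{Spin}(3)=S^3$, and kill the classifying map into the $3$-connected space $BS^3=\mathbb{HP}^\infty$ by elementary obstruction theory; non-compactness enters through the stronger statement that an open connected $n$-manifold has the homotopy type of an $(n-1)$-complex. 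What each approach buys: the paper's argument is shorter but uses Dold--Whitney as a black box; yours is self-contained modulo standard obstruction theory, and it actually proves more---the same argument run with $B\operatorname{Spin}(4)$, which is also $3$-connected, shows that \emph{every} open spin $4$-manifold is parallelizable, with the Lorentzian structure playing no essential role. One simplification worth noting: your heaviest step, the homotopy-dimension reduction via proper Morse theory, is not actually needed. Since $BS^3$ is $3$-connected and $M$ carries a $4$-dimensional CW structure, the only obstruction to null-homotoping $M\to BS^3$ lies in $H^4(M;\pi_4(BS^3))=H^4(M;\mathbb Z)$, which vanishes for connected non-compact $M$---exactly the fact the paper uses, and considerably easier to justify than the existence of a $3$-dimensional spine.
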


\begin{proof}
According to Theorem 1 of Baum \cite{baum}, a time-orientable Lorentzian manifold $M$ is spin if and only if $w_2 (M) = 0$. Therefore, if $M$ is parallelizable it is obviously spin. Conversely, the tangent bundle of $M$ is classified by a homotopy class of maps $M \to BSO^+(3,1)$. Since the inclusion of $SO(3)$ as a subgroup into $SO^+(3,1)$ is a deformation retract, the classifying spaces $BSO^+(3,1)$ and $BSO(3)$ are homotopy equivalent. According to Dold--Whitney \cite{dold-whitney}, the homotopy classes of maps $M \to BSO(3)$ are classified by the second Stiefel--Whitney class $w_2(M)$ and the first Pontryagin class $p_1(M) \in H^4 (M;\mathbb Z)$. Since $M$ is non-compact, the group $H^4(M;\mathbb Z)$ vanishes, and the result follows. 
\end{proof}

In theoretical physics the prevalent view is that physically meaningful spacetimes
(Lorentzian 4-manifolds) are those that are non-compact and time-orientable.
Thus, for physically meaningful spacetimes our parallelizability assumption is
the necessary and sufficient condition for the existence of spin structure.

One can, of course, adopt a purely mathematical approach and study Lorentzian 4-manifolds
that are either compact or non-time-orientable or both. For such Lorentzian 4-manifolds
our analytic definition of spin structure may not work.

\section{Classification beyond spin structure}
\label{Classification beyond spin structure}

Let us consider all possible operators corresponding to the specified Lorentzian metric
(see \eqref{definition of metric}),
with specified charges
(see \eqref{definition of relative orientation}
and \eqref{definition of temporal charge})
and specified spin structure (see Definition~\ref{definition of equivalent operators}).
It turns out that it is possible to classify them further as follows.

Let us define the \emph{covariant subprincipal symbol}
 $\,L_\mathrm{csub}(x)\,$
in accordance with formula
\begin{equation}
\label{definition of covariant subprincipal symbol}
L_\mathrm{csub}:=
L_\mathrm{sub}
+\frac i{16}\,
g_{\alpha\beta}
\{
L_\mathrm{prin}
,
\operatorname{adj}L_\mathrm{prin}
,
L_\mathrm{prin}
\}_{p_\alpha p_\beta},
\end{equation}
where
$
\{F,G,H\}:=F_{x^\alpha}GH_{p_\alpha}-F_{p_\alpha}GH_{x^\alpha}
$
is the generalised Poisson bracket on matrix-functions
and $\,\operatorname{adj}\,$ is the operator of matrix adjugation
\begin{equation}
\label{definition of adjugation}
F=\begin{pmatrix}a&b\\ c&d\end{pmatrix}
\mapsto
\begin{pmatrix}d&-b\\-c&a\end{pmatrix}
=:\operatorname{adj}F
\end{equation}
from elementary linear algebra.
Now take an arbitrary smooth matrix-function \eqref{matrix-function R}
and consider the gauge transformation \eqref{gauge transformation equation}.
It was shown in \cite{nongeometric}
that the transformation
\eqref{gauge transformation equation}
of our differential operator $L$ induces the transformation
\begin{equation}
\label{SL2C transformation of the covariant subprincipal symbol}
L_\mathrm{csub}\mapsto R^*L_\mathrm{csub}R
\end{equation}
of its covariant subprincipal symbol.

Comparing formulae
\eqref{definition of the subprincipal symbol}
and
\eqref{definition of covariant subprincipal symbol}
we see that the standard subprincipal symbol
and covariant subprincipal symbol have the same structure, only
the covariant subprincipal symbol has a second correction term
designed to `take care of' special linear transformations
in the vector space of unknowns $v:M\to\mathbb{C}^2$.
The standard subprincipal symbol \eqref{definition of the subprincipal symbol}
is invariant under changes of local coordinates
(its elements behave as scalars),
whereas the covariant subprincipal
symbol~\eqref{definition of covariant subprincipal symbol}
retains this feature but gains an extra $SL(2,\mathbb{C})$
covariance property. In other words, the covariant subprincipal symbol
\eqref{definition of covariant subprincipal symbol}
behaves `nicely' under a wider group of transformations.

Formulae
\eqref{SL2C transformation of the principal symbol}
and
\eqref{SL2C transformation of the covariant subprincipal symbol}
imply that the covariant subprincipal
symbol can be uniquely represented in the form
\begin{equation}
\label{decomposition of  covariant subprincipal symbol}
L_\mathrm{csub}(x)=L_\mathrm{prin}(x,A(x)),
\end{equation}
where $A=(A_1,A_2,A_3,A_4)$ is some real-valued covector field.
We interpret the covector field appearing in formula
\eqref{decomposition of  covariant subprincipal symbol}
as the electromagnetic covector potential.

It is easy to see that the electromagnetic covector potential
is invariant under gauge transformations
\eqref{gauge transformation equation},
so it can be used for the purpose of further classification of our operators:
the electromagnetic covector potential
defines the operator uniquely modulo a transformation
\eqref{gauge transformation equation}.
Note, however, that this finer classification
is not particularly interesting from the topological
perspective because covector fields form a vector space.

\section{The 3-dimensional Riemannian case}
\label{The 3-dimensional Riemannian case}


Let $M$ be a connected smooth 3-manifold without boundary, not necessarily compact.
As in Section \eqref{Playing field},
let $L$ be a $2\times2$ formally self-adjoint non-degenerate
first order linear differential operator.
In dealing with the 3-dimensional case we make the additional assumption
\begin{equation}
\label{principal symbol is trace-free}
\operatorname{tr}L_\mathrm{prin}(x,p)=0,\qquad\forall(x,p)\in T^*M.
\end{equation}
Note that imposing condition \eqref{principal symbol is trace-free}
in the 4-dimensional setting would not make sense because it would
contradict non-degeneracy \eqref{definition of non-degeneracy equation}.

It is easy to see that under the assumption
\eqref{principal symbol is trace-free}
the non-degeneracy condition
\eqref{definition of non-degeneracy equation}
for our $2\times2$ operator $L$
is equivalent to the condition
\begin{equation}
\label{definition of ellipticity}
\det L_\mathrm{prin}(x,p)\ne0,\qquad\forall(x,p)\in T^*M\setminus\{0\}.
\end{equation}
But \eqref{definition of ellipticity} is the standard ellipticity condition.
Thus, in this section we work with
$2\times2$ formally self-adjoint elliptic
first order linear differential operators $L$ with trace-free principal symbols
which act over a connected smooth 3-manifold $M$ without boundary.

By analogy with Lemma \eqref{lemma on parallelizability} we have
\begin{lemma}
\label{lemma on parallelizability 3D}
The manifold $M$ admits an elliptic operator $L$
with trace-free principal symbol
if and only if it is parallelizable.
\end{lemma}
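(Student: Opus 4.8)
The plan is to mirror the proof of Lemma~\ref{lemma on parallelizability} almost verbatim, the only substantive change being that the trace-free assumption \eqref{principal symbol is trace-free} forces us to work in the three-dimensional real vector space of trace-free $2\times2$ Hermitian matrices rather than in the full four-dimensional space of all Hermitian matrices. First I would decompose the principal symbol with respect to the basis $s^1,s^2,s^3$ from \eqref{standard basis}, which spans the trace-free Hermitian matrices; by \eqref{principal symbol is trace-free} the $s^4$ component is absent, so we obtain
\[
L_\mathrm{prin}(x,p)=s^j\,e_j{}^\alpha(x)\,p_\alpha,\qquad j=1,2,3,
\]
with $\alpha=1,2,3$ now ranging over the three coordinates of the 3-manifold. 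Exactly as in the 4-dimensional case, this establishes a one-to-one correspondence between trace-free Hermitian principal symbols and triads of real vector fields $e_1,e_2,e_3$ on $M$, where each $e_j(x)$ has components $e_j{}^\alpha(x)$.

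Next I would translate non-degeneracy into a condition on the $3\times3$ matrix $e_j{}^\alpha(x)$. Since the $s^j$ are linearly independent, $L_\mathrm{prin}(x,p)=0$ holds if and only if $e_j{}^\alpha(x)\,p_\alpha=0$ for all $j=1,2,3$; hence the non-degeneracy condition \eqref{definition of non-degeneracy equation} — equivalently the ellipticity condition \eqref{definition of ellipticity}, whose equivalence under \eqref{principal symbol is trace-free} is already recorded in the excerpt — asserts precisely that this homogeneous linear system in $p$ admits only the trivial solution, i.e.
\[
\det e_j{}^\alpha(x)\ne0,\qquad\forall x\in M.
\]
If one prefers to argue through \eqref{definition of ellipticity} directly, a one-line computation gives $\det L_\mathrm{prin}(x,p)=-\sum_{j=1}^{3}\bigl(e_j{}^\alpha(x)\,p_\alpha\bigr)^2$, a negative-definite quadratic form in $p$ exactly when the matrix $e_j{}^\alpha(x)$ is invertible, giving the same conclusion. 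The condition $\det e_j{}^\alpha(x)\ne0$ is the linear independence of the vector fields $e_1,e_2,e_3$ at every point, which is by definition the parallelizability of the 3-manifold $M$.

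I expect no genuine obstacle here. As in the remark following Lemma~\ref{lemma on parallelizability}, the decisive point is the numerical coincidence that the dimension of $M$ equals the dimension of the real vector space in which $L_\mathrm{prin}$ now lives — the three-dimensional space of trace-free $2\times2$ Hermitian matrices. The only care required is to confirm that the trace-free restriction \eqref{principal symbol is trace-free} and the ellipticity/non-degeneracy condition are mutually compatible (they are, and their equivalence is already established above), so that the reduction to a $3\times3$ determinant is legitimate.
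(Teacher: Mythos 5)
Your proposal is correct and is exactly what the paper has in mind: the paper's ``proof'' consists of the single remark that the argument is similar to that of Lemma~\ref{lemma on parallelizability}, and your write-up is precisely that argument transplanted to the three-dimensional space of trace-free Hermitian matrices spanned by $s^1,s^2,s^3$, with non-degeneracy again reducing to $\det e_j{}^\alpha\ne0$. The only thing you add beyond the paper is the explicit check via $\det L_\mathrm{prin}(x,p)=-\sum_{j}\bigl(e_j{}^\alpha(x)\,p_\alpha\bigr)^2$, which is a harmless (and correct) elaboration.
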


The proof of Lemma \ref{lemma on parallelizability 3D}
is similar to the proof of Lemma \ref{lemma on parallelizability}.

Further on we assume that our 3-manifold $M$ is parallelizable.

It is known \cite{Stiefel,Kirby}
that a 3-manifold is parallelizable if and only if it is orientable.
Therefore, further on we assume that our 3-manifold $M$ is orientable.
Orientability is our only topological restriction.

We define the metric in accordance with formula \eqref{definition of metric}.
It is easy to see that this metric is Riemannian.

From now on the Riemannian metric is assumed to be fixed.
We will work with all possible
$2\times2$ formally self-adjoint elliptic
first order linear differential operators with trace-free principal symbols
corresponding,
in the sense of formula \eqref{definition of metric},
to the given metric.
Arguing as in Section~\ref{Correspondence between operators and Lorentzian metrics},
it is easy to see that the set of such operators is non-empty.

We specify an orientation on our manifold
and define the topological charge of our operator~as
\begin{equation}
\label{definition of relative orientation 3D}
\mathbf{c}_\mathrm{top}:=
-\frac i2\sqrt{\det g_{\alpha\beta}}\,\operatorname{tr}
\bigl(
(L_\mathrm{prin})_{p_1}
(L_\mathrm{prin})_{p_2}
(L_\mathrm{prin})_{p_3}
\bigr)=\operatorname{sgn}\det e_j{}^\alpha,
\end{equation}
compare with formula \eqref{definition of relative orientation}.
Of course, as we are now working in the 3-dimensional setting,
the free indices in formula \eqref{definition of relative orientation 3D}
run through the values $1,2,3$.
Further on we assume the topological charge to be fixed.

In order to classify our operators further we
introduce an arbitrary smooth $2\times2$ special unitary matrix-function $R$,
\begin{equation}
\label{matrix-function R 3D}
R: M\to SU(2),
\end{equation}
and, as in Section \ref{Gauge transformations and spin structure},
consider the gauge transformation \eqref{gauge transformation equation}.
Comparing formulae \eqref{matrix-function R} and \eqref{matrix-function R 3D}
we see that we are now more restrictive in our choice of matrix-functions $R$,
which is because we want to preserve the condition \eqref{principal symbol is trace-free}.

We define spin structure in the 3-dimensional Riemannian setting in accordance with Definition~\ref{definition of equivalent operators}, having in mind the restricted choice of operators and gauge transformations.
We begin by restating our analytic definition
in terms of framings.
By a \emph{frame} at a point $x \in M$ we mean a positively oriented orthonormal frame in the tangent space $T_x M$, and by a \emph{framing} of $M$ a choice of a frame at every point $x \in M$ depending smoothly on the point.
Framings exist because all orientable 3-manifolds $M$ are parallelizable.
Furthermore, in our case we have an explicit formula
establishing a one-to-one correspondence
between trace-free principal symbols and framings:
this is formula
\eqref{principal symbol via frame 2}
with indices $j$ and $\alpha$ restricted to the set of values $1,2,3$.
Note that this one-to-one correspondence
between trace-free principal symbols and framings
was first observed in \cite[Appendix A]{cdv}.
Any two framings of the same manifold $M$ are related by a uniquely defined smooth function $f: M \to SO(3)$.
Rephrasing Definition~\ref{definition of equivalent operators}
with the restricted choice of operators and gauge transformations,
we will say that two framings are \emph{equivalent} if the function $f$ relating them factors as 
\[
f: M \to SU(2) \overset{\Ad}{\longrightarrow} SO(3),
\]
where $\Ad: SU(2) \to SO(3)$ is the adjoint representation. A spin structure on $M$ is then an equivalence class of framings.

The following result was announced, without proof, in \cite[Section 7]{jst_review}.

\begin{theorem}
\label{Theorem Riemannian}
For orientable Riemannian 3-manifolds
our analytic definition of spin structure
is equivalent to the standard definition of
\cite[Section~2.1]{friedrich}.
\end{theorem}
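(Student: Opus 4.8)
The plan is to follow the template set by the proof of Theorem~\ref{Theorem Lorentzian}, replacing the noncompact groups $SL(2,\mathbb{C})$ and $SO^+(3,1)$ by their compact analogues $SU(2)$ and $SO(3)$. First I would spell out the definition of spin structure from \cite[Section~2.1]{friedrich} in dimension three: denoting by $P_{SO(3)}$ the bundle of positively oriented orthonormal frames of $(M,g)$, a spin structure is a principal $SU(2)$-bundle $P$ over $M$ together with a smooth double covering $\Theta\colon P\to P_{SO(3)}$ which is equivariant with respect to the covering homomorphism $\Ad\colon SU(2)\to SO(3)$, i.e.\ $\Theta(p\cdot g)=\Theta(p)\cdot\Ad g$; two spin structures $(P_1,\Theta_1)$ and $(P_2,\Theta_2)$ are declared equivalent when there is a principal $SU(2)$-bundle isomorphism $F\colon P_1\to P_2$ with $\Theta_2\circ F=\Theta_1$. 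The chosen reference framing trivializes $P_{SO(3)}=M\times SO(3)$, exactly as in the Lorentzian case.

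The one genuinely new ingredient is the triviality of the total space $P$ as a principal $SU(2)$-bundle, which is what allows the Friedrich definition to be rewritten in the trivialized diagrammatic form of Section~\ref{Main result}, now with $M\times SU(2)$ in place of $M\times SL(2,\mathbb{C})$ and $M\times SO(3)$ in place of $M\times SO^+(3,1)$. I would deduce this on dimensional grounds: principal $SU(2)$-bundles over $M$ are classified by homotopy classes of maps $M\to BSU(2)$, and since $BSU(2)\simeq\mathbb{HP}^\infty$ is $3$-connected while $M$ has the homotopy type of a $3$-dimensional CW complex, every such map is null-homotopic. Hence $P\cong M\times SU(2)$, and the equivariance of $\Theta$ forces it to restrict on each fiber to the connected double cover $\Ad$, so that $\Theta$ takes the form $\Theta(x,a)=(x,\psi(x)\,\Ad a)$ for a uniquely determined smooth map $\psi\colon M\to SO(3)$.

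With this reduction in hand the remainder is formally identical to the proof of Theorem~\ref{Theorem Lorentzian}. The equivariance of $\Theta$ shows that a spin structure is completely encoded by the map $\psi(x)=\Theta(x,1)$, viewed in $SO(3)$; an equivalence $F$ between two spin structures is likewise encoded by a smooth map $\beta\colon M\to SU(2)$, and the condition $\Theta_2\circ F=\Theta_1$ translates into the single equation $\psi_2\cdot\Ad\beta=\psi_1$ of functions $M\to SO(3)$. Under the correspondence identifying the datum $\psi$ with the function $f\colon M\to SO(3)$ that relates a framing to the reference framing, this is precisely the analytic equivalence relation $f_2\cdot\Ad\beta=f_1$ on framings introduced just before the theorem. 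Since the indexing data and the equivalence relations coincide on the two sides, the two notions of spin structure agree, and the resulting bijection depends only on the choice of reference frame.

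I expect the main obstacle to be the triviality statement of the second paragraph. One must check that the classifying-space argument really applies---using that a triangulable $3$-manifold has the homotopy type of a $3$-complex and that $BSU(2)$ is $3$-connected---and, equally importantly, that the covering produced is the connected cover $\Ad$ rather than a split double cover; the latter is guaranteed by the equivariance built into Friedrich's definition. Everything downstream of this reduction is the same manipulation of equivariant maps already carried out in the Lorentzian proof and introduces no new difficulties.
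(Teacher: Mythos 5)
Your proposal is correct and takes essentially the same approach as the paper: the paper proves Theorem~\ref{Theorem Riemannian} by simply replaying the proof of Theorem~\ref{Theorem Lorentzian} with $\Ad\colon SL(2,\mathbb C)\to SO^+(3,1)$ replaced by $\Ad\colon SU(2)\to SO(3)$, which is exactly what your first and third paragraphs do. Your second paragraph (triviality of the principal $SU(2)$-bundle $P$ via the $3$-connectedness of $BSU(2)$, so that Friedrich's bundle-theoretic definition can be rewritten in the trivialized diagrammatic form of Section~\ref{Main result}) makes explicit a reduction that the paper takes for granted when it states the traditional definition directly over $M\times SO^+(3,1)$ (respectively $M\times SO(3)$); this is a welcome justification of an implicit step rather than a genuinely different route.
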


\begin{proof}
The proof of equivalence from Theorem \ref{Theorem Lorentzian}
goes through with little change once we replace the adjoint representation $\Ad: SL(2,\mathbb C) \to SO^+(3,1)$ by the adjoint representation $\Ad: SU(2) \to SO(3)$.
\end{proof}

In what follows, we provide a different proof of Theorem \ref{Theorem Riemannian}. This proof only works when $M$ is compact but it has the advantage that the equivalence it provides is canonical. The standard definition of spin structure we will use in this proof is the one often used by topologists; it can be found in Kaplan \cite{kaplan}, Definition 1.7, or Milnor \cite{milnor}, Alternative Definition 2 (see also Remark~\ref{R:milnor} regarding the latter definition at the end of this section). 

According to that definition, a spin structure on $M$ is a homotopy class of \emph{almost-framings} of $M$. By an almost-framing of $M$ one means a framing of the punctured manifold
\[
M_0:= M\setminus\{\text{point}\}.
\] 
Two framings of $M_0$ are said to be homotopic if they can be connected by a path of framings of $M_0$. Any framing of $M$ gives rise to a homotopy class of almost-framings of $M$ by restricting it to $M_0$ and taking the homotopy class of this restriction.

\begin{proposition}\label{P:one}
This gives a well-defined map $\psi$ from the set of spin-structures on $M$ as defined above to the set of the homotopy classes of almost-framings of $M$.
\end{proposition}

\begin{proof}
A function $f: M \to SO(3)$ relating two framings of $M$ restricts to a function $f_0: M_0 \to SO(3)$ relating their restrictions to $M_0$. If the function $f$ factors through $M \to SU(2)$, its restriction $f_0$ factors through $M_0 \to SU(2)$. However, every function $M_0 \to SU(2)$ is homotopic to a constant function, which in addition can be chosen to send the entire $M_0$ to the identity element of $SU(2)$. The composition of this homotopy with the adjoint representation $SU(2) \to SO(3)$ then provides a homotopy of the induced framings of $M_0$.
\end{proof}

\begin{proposition}\label{P:two}
The map $\psi$ is a bijection from the set of spin-structures on $M$ as defined above to the set of the homotopy classes of almost-framings of $M$.
\end{proposition}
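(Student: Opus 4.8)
The plan is to prove that $\psi$ is simultaneously surjective and injective, working throughout with the map description of framings supplied by the fixed reference framing. In these terms a framing of $M$ is a smooth map $M\to SO(3)$, an almost-framing is a smooth map $M_0\to SO(3)$, the map $\psi$ sends the spin structure of a framing to the homotopy class of its restriction to $M_0$, and by the analytic definition two framings are equivalent exactly when the map $f\colon M\to SO(3)$ relating them lifts through the double cover $\Ad\colon SU(2)\to SO(3)$. By the lifting criterion for this cover the latter holds if and only if $f^*w=0$, where $w$ generates $H^1(SO(3);\mathbb{Z}_2)\cong\mathbb{Z}_2$.

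First I would prove surjectivity. Given an arbitrary almost-framing $\eta\colon M_0\to SO(3)$, I would show that after a homotopy it extends across the removed point to a genuine framing of $M$. The only obstruction is the homotopy class of the restriction of $\eta$ to a small sphere $S^2$ linking the puncture, an element of $\pi_2(SO(3))$; since $\pi_2(SO(3))=0$ this restriction is null-homotopic. Identifying a punctured ball neighbourhood of the point with $S^2\times(0,1]$, I would use such a null-homotopy to deform $\eta$, supported inside this neighbourhood, into a map that is constant near the puncture and hence extends continuously over it. The extension is a framing $F$ of $M$ with $\psi(F)=[\eta]$, so $\psi$ is surjective.

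Next I would prove injectivity. Let $F_1,F_2$ be framings of $M$ with $\psi(F_1)=\psi(F_2)$ and let $f\colon M\to SO(3)$ be the map relating them. The hypothesis says precisely that $f|_{M_0}$ is null-homotopic, whence $(f|_{M_0})^*w=0$ in $H^1(M_0;\mathbb{Z}_2)$. The key point is that the restriction map $H^1(M;\mathbb{Z}_2)\to H^1(M_0;\mathbb{Z}_2)$ is an isomorphism: removing a point from a $3$-manifold affects cohomology only in degree $3$, as the long exact sequence of the pair $(M,M_0)$ combined with the excision computation $H^k(M,M_0;\mathbb{Z}_2)\cong\tilde H^{k-1}(S^2;\mathbb{Z}_2)$ shows. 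Therefore $f^*w=0$, so $f$ factors through $\Ad\colon SU(2)\to SO(3)$ and $F_1,F_2$ define the same spin structure. Finally, since the whole construction of $\psi$ refers only to restriction to $M_0$ and passage to homotopy classes, never to the chosen reference framing, the resulting bijection is canonical --- the advertised advantage of this second proof.

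Compactness enters by placing us squarely in the Milnor--Kaplan setting: it guarantees that $M_0$ is homotopy equivalent to a $2$-complex, so that homotopy classes of almost-framings are governed entirely by $H^1(M_0;\mathbb{Z}_2)$ and the two sides of $\psi$ match up without higher obstructions. I expect the main obstacle to be the surjectivity step. Although it reduces to the vanishing of $\pi_2(SO(3))$, some care is needed to carry out the extension-by-homotopy across the puncture cleanly, controlling $\eta$ on a neighbourhood of the removed point and splicing in the null-homotopy without disturbing $\eta$ elsewhere. By contrast, once the restriction isomorphism $H^1(M;\mathbb{Z}_2)\cong H^1(M_0;\mathbb{Z}_2)$ is in hand, injectivity is essentially a one-line cohomological computation.
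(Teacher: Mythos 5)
Your proof is correct and takes essentially the same route as the paper's: the same identification of both sides with maps to $SO(3)$ via a reference framing, surjectivity from $\pi_2(SO(3))=0$, and injectivity from the lifting criterion for the double cover $\Ad\colon SU(2)\to SO(3)$ combined with the fact that puncturing $M$ does not change the relevant invariant. The only divergence is one of language: the paper runs the injectivity step through fundamental groups (null-homotopic restriction $\Rightarrow$ trivial homomorphism $\pi_1(M_0)=\pi_1(M)\to\pi_1(SO(3))$ $\Rightarrow$ lift exists), while you run it through $\mathbb{Z}_2$-cohomology ($f^*w=0$ together with the excision isomorphism $H^1(M;\mathbb{Z}_2)\cong H^1(M_0;\mathbb{Z}_2)$), an equivalent formulation of the same criterion.
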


\begin{proof}
For the purposes of this proof, fix a reference framing of $M$ so that any other framing is obtained from the reference framing by applying a function $f: M \to SO(3)$. This identifies the set of all framings of $M$ with the set $\Maps(M,SO(3))$. The equivalence relation on the framings translates into an equivalence relation on $\Maps(M,SO(3))$, two functions $f, g: M \to SO(3)$ being equivalent if and only if there exists a function $h: M \to SU(2)$ such that $g(x) = \Ad h(x) \cdot f(x)$ for all $x \in M$. Note that the point-wise multiplication makes both $\Maps(M,SU(2))$ and $\Maps(M,SO(3))$ into groups, and the induced map 
\[
\Ad_*: \Maps(M,SU(2)) \to \Maps (M,SO(3))
\]
into a group homomorphism. The set of spin-structures on $M$ is then identified with the quotient $\Maps(M,SO(3))/ \Im \Ad_*$. The latter set  in general does not carry a natural group structure because $\Im \Ad_*$ need not be a normal subgroup.

Next, fix a reference framing of $M_0$ by restricting the reference framing from $M$. The set of the homotopy classes of almost-framings of $M$ is then identified with the set $[M_0,SO(3)]$ of the homotopy classes of maps $M_0 \to SO(3)$. With respect to this identification and the one from the previous paragraph, the map
\[
\psi: \Maps(M,SO(3))/\Im \Ad_* \longrightarrow [M_0,SO(3)]
\]
is given by the formula $\psi (f) = [f_0]$, where $[f_0]$ stands for the homotopy class of the map $f_0: M_0 \to SO(3)$ obtained by restricting $f: M \to SO(3)$ to $M_0$.

The map $\psi$ is surjective because any map $M_0 \to SO(3)$ extends to a map $M \to SO(3)$ due to the fact that $\pi_2 (SO(3)) = 0$. The map $\psi$ is also injective: suppose $f, g: M \to SO(3)$ are such that their restrictions $f_0, g_0: M_0 \to SO(3)$ are homotopic. Then $g_0 \cdot f_0^{-1}: M_0 \to SO(3)$ is homotopic to the constant map taking the entire $M_0$ to the identity element in $SO(3)$. Therefore, $g_0 \cdot f_0^{-1}$ induces a trivial homomorphism $\pi_1 (M_0) \to \pi_1 (SO(3))$ on the fundamental groups. Keeping in mind that $g_0\,\cdot\,f_0^{-1}$ is the restriction of $g\,\cdot\, f^{-1}$ and that $\pi_1(M) = \pi_1 (M_0)$, we conclude that the map $g\, \cdot\, f^{-1}$ induces a trivial homomorphism $\pi_1 (M) \to \pi_1 (SO(3))$ as well. The lifting criterion applied to the double covering $\Ad: SU(2) \to SO(3)$ then implies that $g \cdot f^{-1}$ lifts to a map $h: M \to SU(2)$ thereby ensuring that $g = \Ad h\cdot f$. This completes the proof.
\end{proof}

\begin{proposition}
\label{7 point 4}
The set of spin-structures on $M$ is in a bijective correspondence with the cohomology group
$H^1 (M;\mathbb Z_2)$.
\end{proposition}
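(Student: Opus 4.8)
The plan is to combine the bijection of Proposition~\ref{P:two} with a standard obstruction-theoretic computation. Proposition~\ref{P:two} already identifies the set of spin structures on $M$ with the set $[M_0,SO(3)]$ of homotopy classes of maps from the punctured manifold $M_0 = M\setminus\{\text{point}\}$ into $SO(3)$. It therefore suffices to produce two further bijections: first, $[M_0,SO(3)] \cong H^1(M_0;\mathbb{Z}_2)$, and second, $H^1(M_0;\mathbb{Z}_2) \cong H^1(M;\mathbb{Z}_2)$. Composing these with the map $\psi$ of Proposition~\ref{P:two} yields the claim.

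For the first bijection I would exploit the low-dimensional homotopy type of both source and target. Since $M$ is a closed connected $3$-manifold it admits a CW structure with a single top cell, and deleting an interior point of that cell lets $M_0$ deformation retract onto the $2$-skeleton; hence $M_0$ is homotopy equivalent to a CW complex of dimension at most $2$. On the target side, $SO(3)\cong\RP^3$ has $\pi_1(SO(3))=\mathbb{Z}_2$ and $\pi_2(SO(3))=0$, so the first Postnikov section $p: SO(3)\to K(\mathbb{Z}_2,1)$ induces isomorphisms on $\pi_i$ for $i\le 2$ and is surjective on $\pi_3$; equivalently, its homotopy fibre is $2$-connected and $p$ is a $3$-connected map. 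The standard fact that a $3$-connected map induces a bijection on homotopy classes of maps out of any complex of dimension at most $2$ then gives $[M_0,SO(3)]\cong[M_0,K(\mathbb{Z}_2,1)]$, and the representability of ordinary cohomology by Eilenberg--MacLane spaces identifies the latter with $H^1(M_0;\mathbb{Z}_2)$. The vanishing of $\pi_2(SO(3))$ is precisely what makes this step go through.

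For the second bijection I would use that deleting a point from a manifold of dimension at least $3$ does not affect first cohomology. Writing $M$ as the union of $M_0$ with a small open ball $V$ around the deleted point, with intersection homotopy equivalent to $S^2$, the Mayer--Vietoris sequence with $\mathbb{Z}_2$ coefficients reads $\tilde H^0(S^2)\to H^1(M)\to H^1(M_0)\oplus H^1(V)\to \tilde H^1(S^2)$; here $\tilde H^0(S^2)=\tilde H^1(S^2)=0$ and $H^1(V)=0$, so $H^1(M;\mathbb{Z}_2)\cong H^1(M_0;\mathbb{Z}_2)$. (Equivalently, the inclusion $M_0\hookrightarrow M$ is an isomorphism on $\pi_1$, hence on $H_1$, hence on $H^1(-;\mathbb{Z}_2)$ by universal coefficients.)

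Putting the three bijections together proves the proposition. The step I expect to be the main obstacle is the obstruction-theoretic identification $[M_0,SO(3)]\cong H^1(M_0;\mathbb{Z}_2)$: one must verify carefully both that $M_0$ genuinely has the homotopy type of a $2$-complex and that the $3$-connectivity of $SO(3)\to K(\mathbb{Z}_2,1)$ legitimately upgrades from a statement about homotopy groups to a bijection of mapping sets in this dimension. The two cohomological identifications, by contrast, are routine.
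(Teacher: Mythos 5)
Your proof is correct and follows essentially the same route as the paper: Proposition~\ref{P:two} reduces the claim to computing $[M_0,SO(3)]$, which is then identified with $H^1(M_0;\mathbb{Z}_2)=H^1(M;\mathbb{Z}_2)$. The only cosmetic differences are that you phrase the middle step via the $3$-connected Postnikov section $SO(3)\to K(\mathbb{Z}_2,1)$, whereas the paper uses the equivalent concrete model --- the inclusion $\RP^3=SO(3)\hookrightarrow\RP^{\infty}=K(\mathbb{Z}_2,1)$ together with cellular approximation --- and that you spell out (via Mayer--Vietoris) the final identification $H^1(M_0;\mathbb{Z}_2)=H^1(M;\mathbb{Z}_2)$, which the paper leaves as a one-line observation.
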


\begin{proof}
Proposition \ref{P:two} identifies the set of spin-structures on $M$ with the set $[M_0,SO(3)]$. Since $SO(3) = \RP^3$, one can use the cellular approximation theorem to identify the latter set with $[M_0,\RP^{\infty}]$. Here, $\RP^{\infty}$ is the infinite dimensional real projective space, which is known to have the homotopy type of the Eilenberg--MacLane space $K(\mathbb Z_2,1)$. Therefore, $[M_0,\RP^{\infty}] = [M_0,K(\mathbb Z_2,1)] = H^1 (M_0;\mathbb Z_2)$; see, for instance Theorem 4.57 of Hatcher \cite{hatcher}. To finish the proof, one simply observes that $H^1 (M_0;\mathbb Z_2) = H^1 (M;\mathbb Z_2)$.
\end{proof}

\begin{remark}
\label{7 point 4 remark}
Proposition \ref{7 point 4} is a well-known fact
and it can be proved in a number of different ways.
See, for instance, \cite[second proposition on page 40]{friedrich}
or \cite{Lawson}.
\end{remark}

\begin{remark}\label{R:milnor}
Milnor \cite{milnor} defines a spin-structure on a CW-complex $M$ as the homotopy class of framings on the 1-skeleton $M^{(1)}$ which can be extended to framings on the 2-skeleton $M^{(2)}$. That this definition is equivalent to the definition of Kaplan \cite{kaplan} for 3-manifolds $M$ follows by combining the fact that $M^{(2)}$ is a deformation retract of $M_0$ with the Puppe exact sequence
\medskip 
\[
0 = \left[\bigvee S^2, SO(3)\right] \longrightarrow \left[M^{(2)},SO(3)\right] \longrightarrow \left[M^{(1)}, SO(3)\right] 
\]

\medskip\noindent
of the cofibration $M^{(1)} \to M^{(2)} \to \bigvee S^2$; see, for instance, Davis--Kirk \cite[Theorem 6.42]{davis-kirk}.
\end{remark}


\end{document}